
\documentclass[11pt, a4paper]{amsart}
\usepackage{a4wide}
\usepackage{amssymb}
\usepackage{amsthm}
\usepackage{amsfonts}
\newtheorem{lemma}{\bf Lemma}[section]

\newtheorem{proposition}[lemma]{\bf Proposition}
\newtheorem{fact}[lemma]{\bf Fact}

\newcommand{\Pz}{{\mathcal P}_2}
\newcommand{\card}{{\mathrm{card}}}

\begin{document}

\title{Hadwiger's conjecture for graphs with infinite chromatic number}

\author{Dominic van der Zypen}
\address{M\&S Software Engineering, Morgenstrasse 129, CH-3018 Bern,
Switzerland}
\email{dominic.zypen@gmail.com}

\subjclass[2010]{05C15, 05C83}

\begin{abstract} We construct a connected graph
$H$ such that \begin{enumerate}
\item $\chi(H) = \omega$; \item $K_\omega$, the complete graph on $\omega$
points, is not a minor of $H$.\end{enumerate} 
Therefore Hadwiger's conjecture
does not hold for graphs with infinite coloring number.
\end{abstract}

\maketitle
\parindent = 0mm
\parskip = 2 mm
\section{Notation}
In this note we are only concerned with simple
undirected graphs $G = (V, E)$ 
where $V$ is a set and $E \subseteq \Pz(V)$ where
$$\Pz(V) = \big\{ \{x,y\} : x,y \in V\textrm{ and } x\neq y\big\}.$$ We
also require that $V\cap E = \emptyset$ to avoid notational ambiguities.
We denote the vertex set of a graph $G$ by $V(G)$ and the edge set by 
$E(G)$. Moreover, for any cardinal $\alpha$ we denote the complete
graph on $\alpha$ points by $K_\alpha$. 

For any graph $G$, disjoint subsets $S, T \subseteq V(G)$ 
are said to be {\em connected
to each other} if there are $s \in S, t\in T$ with $\{s,t\}\in E(G)$.
Note that $K_\alpha$ is a {\em minor} of a graph $G$ if and only if 
there is 
a collection $\{S_\beta: \beta \in \alpha\}$ of nonempty, connected and 
pairwise disjoint subsets of $V(G)$ such that for all $\beta,\gamma \in \alpha$
with $\beta \neq \gamma$ the sets $S_\beta$ and $S_\gamma$ are connected
to each other. We will need the following observation later on:
\begin{fact}\label{connectedness}For any graph $G$, finite or infinite, the 
following are equivalent:\begin{enumerate}
\item $G$ is connected;
\item if $S,T \subseteq V(G)$ are nonempty and disjoint
such that $S\cup T = V(G)$ then $S, T$ are connected to each other.
\end{enumerate}
\end{fact}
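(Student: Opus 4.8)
The plan is to establish the two implications $(1) \Rightarrow (2)$ and $(2) \Rightarrow (1)$ separately, proving the first directly from the existence of paths and the second by contraposition via connected components. Throughout I take the standard definition that $G$ is connected if and only if any two of its vertices are joined by a finite path; this definition is purely combinatorial and does not depend on $V(G)$ being finite, so the argument will cover the infinite case without modification.

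For $(1) \Rightarrow (2)$, suppose $G$ is connected and let $S, T$ partition $V(G)$ into nonempty disjoint pieces. Fix $s \in S$ and $t \in T$ and choose a finite path $s = v_0, v_1, \ldots, v_n = t$ in $G$. Since $v_0 \in S$ and $v_n \in T$, the set of indices $i$ with $v_i \in T$ is nonempty; let $k$ be its least element. Then $k \geq 1$, and minimality forces $v_{k-1} \in S$, while $v_k \in T$ and $\{v_{k-1}, v_k\} \in E(G)$. This edge witnesses that $S$ and $T$ are connected to each other.

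For $(2) \Rightarrow (1)$ I argue contrapositively: assuming $G$ is not connected, I produce a bipartition violating $(2)$. If $G$ is disconnected there are vertices $u, w$ with no path between them. Let $S$ be the set of all vertices reachable from $u$ by a finite path and set $T = V(G) \setminus S$. Then $u \in S$ and $w \in T$, so both parts are nonempty, and by construction they are disjoint with $S \cup T = V(G)$. Finally, no edge can join $S$ to $T$: an edge $\{s, t\}$ with $s \in S$ and $t \in T$ would extend a path from $u$ to $s$ into a path from $u$ to $t$, contradicting $t \notin S$. Hence $S$ and $T$ are not connected to each other, so $(2)$ fails.

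I do not expect a genuine obstacle here; the only points requiring care are the degenerate cases and the passage to the infinite setting. The selection of the least index $k$ uses the well-ordering of $\{0, \ldots, n\}$, which is finite regardless of how large $V(G)$ is, so the first implication is insensitive to cardinality. For the second implication, the reachability class $S$ is again defined through finite paths, so it behaves identically when $V(G)$ is infinite. The trivial cases in which $V(G)$ is empty or has exactly one element make $(2)$ vacuously true and are connected by convention, so they cause no difficulty.
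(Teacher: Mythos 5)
Your proof is correct; it is the standard argument (first crossing edge along a path for one direction, reachability classes for the converse), and it handles the infinite case properly since paths are finite objects. The paper itself states this Fact without proof, treating it as an observation, so your write-up simply supplies the routine details the author omitted.
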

\section{The construction}
In \cite{Ha}, Hadwiger formulated his well-known and deep conjecture, 
linking the chromatic number $\chi(G)$ of a graph $G$ with 
clique minors. His conjecture can be formulated that $K_{\chi(G)}$ is 
a minor of $G$ for every graph $G$. In the following we present a connected 
graph $H$ with chromatic number $\omega$ such that $K_\omega$ is not a
minor of $H$. Let $\mathbb{N}$ be the set of positive integers. For any 
$n\in \mathbb{N}$ we let
$$C_n = \{1,\ldots, n\}\times\{n\}$$ and set 
$V(H) = \bigcup_{n\in\mathbb{N}} C_n.$ As for the edge set of $H$, we define
$$E(H) = \big\{\{(1,n), (1,n+1)\}: n\in \mathbb{N}\big\} \cup
\bigcup_{n\in\mathbb{N}}\Pz(C_n).$$
\begin{proposition}$\chi(H) = \omega.$\end{proposition}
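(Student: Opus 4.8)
The plan is to establish the two inequalities $\chi(H) \le \omega$ and $\chi(H) \ge \omega$ separately. The first is immediate from cardinality: the vertex set $V(H) = \bigcup_{n \in \mathbb{N}} C_n$ is a countable union of finite sets, hence countable, so assigning a distinct color to each vertex already yields a proper coloring using at most $\omega$ colors. Thus $\chi(H) \le \omega$.

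For the lower bound, the key observation is that for each $n \in \mathbb{N}$ the set $C_n = \{1,\ldots,n\}\times\{n\}$ has exactly $n$ elements, and since $\Pz(C_n) \subseteq E(H)$, the subgraph of $H$ induced on $C_n$ is the complete graph $K_n$. Any proper coloring of $K_n$ must assign pairwise distinct colors to its $n$ vertices, so at least $n$ colors are required to color $C_n$ alone. Consequently, no proper coloring of $H$ with $k < \omega$ colors can exist, because the induced clique on $C_{k+1}$ already needs $k+1 > k$ distinct colors. Therefore $\chi(H) > k$ for every finite $k$, which gives $\chi(H) \ge \omega$.

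Combining the two bounds yields $\chi(H) = \omega$, using that the cardinals are well-ordered so that $\omega$ is indeed the least cardinal admitting a proper coloring. I expect no genuine obstacle in this proposition: the argument rests only on the presence of arbitrarily large finite cliques $C_n$, which forces $\chi(H) \ge \omega$, together with the countability of $V(H)$, which forces $\chi(H) \le \omega$. In particular, the path-type edges $\{(1,n),(1,n+1)\}$ and the connectedness of $H$ play no role here; they will instead be the decisive ingredients in the later argument showing that $K_\omega$ is not a minor of $H$.
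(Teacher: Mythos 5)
Your proof is correct and follows essentially the same route as the paper: the upper bound from countability of $V(H)$, and the lower bound from the arbitrarily large cliques $C_n$. You have simply spelled out the details more fully.
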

\begin{proof}Since we have $\card(V(H)) = \omega$ we get $\chi(H) 
\leq \omega$. Moreover, each $C_n$ is a complete subgraph of $H$, so
$H$ cannot be colored with finitely many colors. \end{proof}

For the remainder of this note, we assume that $\{S_n: n \in \omega\}$
is a collection of nonempty, connected, pairwise disjoint subsets
of $H$ such that for $m\neq n$ the sets $S_n, S_m$ are connected
to each other. Our goal is to show that such a collection cannot exist.

First, we need a simple observation
on what a connected subset of $H$ looks like. If $S\subseteq V(H)$ we
define $I(S) = \{n\in \mathbb{N}: C_n\cap S \neq \emptyset\}$.
\begin{lemma}\label{interval}
Suppose $S\subseteq V(H)$ is connected and $m<n \in I(S)$.
Then for all $x\in \mathbb{N}$ with $m\leq x\leq n$ we have $(1,x) \in S$.
\end{lemma}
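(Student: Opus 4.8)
The plan is to exploit the rigid global structure of $H$: the only edges joining distinct ``levels'' $C_k$ and $C_{k'}$ are the spine edges $\{(1,k),(1,k+1)\}$, so any attempt to move between levels is forced through the spine vertices $(1,\cdot)$. I would first record the behaviour of the level map $\pi\colon V(H)\to\mathbb{N}$ given by $\pi\big((i,k)\big)=k$. For any edge $e=\{u,v\}$ of $H$ exactly one of two things happens: either $e\in\Pz(C_k)$ for some $k$, so that $\pi(u)=\pi(v)$, or $e=\{(1,k),(1,k+1)\}$, so that $|\pi(u)-\pi(v)|=1$ and, crucially, \emph{both} endpoints of $e$ are spine vertices of the form $(1,\cdot)$. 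Thus along any edge the level changes by at most $1$, and it changes by exactly $1$ only across a spine edge.

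Since $S$ is connected and $m,n\in I(S)$, I would then fix vertices $a\in C_m\cap S$ and $b\in C_n\cap S$ and a path $v_0=a,v_1,\ldots,v_\ell=b$ lying entirely inside the induced subgraph on $S$; connectedness supplies such a finite path even though $H$ is infinite. Here $\pi(v_0)=m$ and $\pi(v_\ell)=n$ with $m<n$.

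The core step is a discrete intermediate-value argument applied to the sequence $\pi(v_0),\ldots,\pi(v_\ell)$, whose consecutive terms differ by at most $1$. Fixing $x$ with $m<x\le n$, I would set $j=\max\{i:\pi(v_i)<x\}$, which exists because $\pi(v_0)=m<x\le n=\pi(v_\ell)$. By maximality $\pi(v_{j+1})\ge x$ while $\pi(v_j)<x$, so the level strictly increases across the edge $\{v_j,v_{j+1}\}$; by the edge dichotomy this forces $\pi(v_j)=x-1$, $\pi(v_{j+1})=x$, and the edge to be the spine edge $\{(1,x-1),(1,x)\}$, whence $(1,x)\in S$. This settles every $x$ with $m<x\le n$. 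For the boundary value $x=m$ I would instead use that $(1,m)$ is the unique vertex of $C_m$ adjacent to anything outside $C_m$: letting $k$ be the first index with $v_k\notin C_m$ (which exists since $v_\ell\in C_n$ and $n\neq m$), the edge $\{v_{k-1},v_k\}$ leaves level $m$ and so must be a spine edge through $(1,m)$, giving $v_{k-1}=(1,m)\in S$.

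The only genuinely structural point, and hence the step I would be most careful with, is the edge dichotomy: I must verify that every level-changing edge is a spine edge and therefore contains the specific vertex $(1,x)$, not merely some vertex at level $x$. A minor wrinkle is that the intermediate-value argument as stated produces $(1,x)$ only for $x>m$, so the boundary value $x=m$ has to be obtained separately through the gateway observation above. Once both points are in place, the remaining bookkeeping is routine.
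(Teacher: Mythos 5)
Your proof is correct, but it takes a different route from the paper. The paper argues by contradiction using the cut criterion of Fact~\ref{connectedness}: assuming $(1,m)\notin S$, it splits $S$ into $T=S\cap C_m$ and $S\setminus T$, and assuming $(1,x)\notin S$ for some $m<x<n$, it splits $S$ into $T=\{(i,j)\in S: j<x\}$ and $S\setminus T$; in each case both parts are nonempty (witnessed by $C_m\cap S$ and $C_n\cap S$) and no edge of $H$ joins them, because the only edges crossing the relevant cut would have to use the missing spine vertex. You instead argue directly: you fix a finite path in $S$ from $C_m$ to $C_n$ and run a discrete intermediate-value argument on the level map $\pi$, using the edge dichotomy (levels change only across spine edges, and then by exactly $1$) to locate $(1,x)$ on the path for each $m<x\le n$, with a separate ``gateway'' step for $x=m$. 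The two proofs rest on the same structural observation about $H$ --- every level-changing edge is a spine edge --- but the paper packages it as a disconnecting partition while you package it as a forced waypoint on a path. Your version is constructive and even shows all the required spine vertices lie on a single path inside $S$, at the modest cost of having to invoke the path characterization of connectedness and of treating the endpoint $x=m$ (and implicitly $x=n$, which your IVT step does cover since $x=n>m$) by a separate case, where the paper handles both endpoints and the interior values with the same cut device. Both arguments are complete and sound.
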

\begin{proof} If $(1,m)\notin S$ then $T=S\cap C_m$ and $S\setminus T$
are disjoint, nonempty and not connected to each other. By Fact 
\ref{connectedness}, $S$ is not connected, contradicting our assumption.
A similar argument shows that $(1,n)\in S$. 
Suppose there is $x$ with $m<x<n$ and
$(1,x) \notin S$. Then set $T = \{(i,j) \in S: j < x\}$.
Again, $T$ and $S\setminus T$ are nonempty and not connected to each 
other, so $S$ is not connected, contradicting our assumption.
\end{proof}
If $\{S_n: n \in \omega\}$ is a collection of subsets of $V(H)$ as
described above, then for every $k\in \mathbb{N}$ the set of 
neighbors of $S_k$, which is denoted by $N(S_k)$, must be infinite.
As the next lemma shows, this implies that $I(S_k)$ must be infinite 
for all $k\in \mathbb{N}$.
\begin{lemma}\label{infinite}
If $S\subseteq V(H)$ is such that $I(S)$ is finite,
then $N(S)$ is finite. 
\end{lemma}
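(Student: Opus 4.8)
The plan is to observe that the hypothesis $I(S)$ finite forces $S$ itself to be finite, and then to argue that the neighborhood of any finite set in $H$ is finite because $H$ is locally finite. So the work splits cleanly into two short observations, neither of which involves any real difficulty.

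First I would note that $S \subseteq \bigcup_{n \in I(S)} C_n$, simply by the definition of $I(S)$. Since each level $C_n = \{1,\ldots,n\}\times\{n\}$ has exactly $n$ elements, and $I(S)$ is finite by hypothesis, the set $S$ is a finite union of finite sets and is therefore finite.

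Next I would bound the degree of an arbitrary vertex $(i,n)$ of $H$. The vertex $(i,n)$ lies only in the level $C_n$, so the neighbors it gains from the edges in $\Pz(C_n)$ number at most $n-1$. The only remaining edges of $H$ are the vertical path edges $\{(1,n),(1,n+1)\}$, and these can contribute a neighbor to $(i,n)$ only when $i=1$, in which case there are at most the two neighbors $(1,n-1)$ and $(1,n+1)$. Hence every vertex of $H$ has finite degree.

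Finally, since $N(S) \subseteq \bigcup_{s \in S} N(\{s\})$ is a finite union of finite sets, it is finite, which is the desired conclusion. The only point that requires any care is to keep the two edge types — the horizontal edges living inside a single $C_n$ and the vertical path edges joining consecutive levels — clearly separated when counting neighbors; beyond this bookkeeping I do not expect any genuine obstacle.
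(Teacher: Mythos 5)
Your proof is correct. It takes a mildly but genuinely different route from the paper's: the paper simply sets $m = \max(I(S))$ and observes that every neighbor of a vertex lying in one of the levels $C_1,\ldots,C_m$ must itself lie in $\bigcup_{i=1}^{m+1} C_i$, a finite set --- a one-line containment that exploits the layered structure of $H$ (edges either stay inside a level or join $(1,n)$ to $(1,n+1)$, so neighbors can live at most one level higher). You instead factor the argument through two general observations: $S$ itself is finite, being contained in the finite union $\bigcup_{n\in I(S)}C_n$ of finite levels, and $H$ is locally finite, so the neighborhood of a finite set is a finite union of finite sets. Your version is slightly longer but isolates a reusable general fact (finite subsets of locally finite graphs have finite neighborhoods) and makes explicit the local finiteness of $H$, which the paper never states; the paper's version is shorter because it never needs to count degrees, only to locate which levels the neighbors can occupy. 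Both arguments ultimately rest on the same two features of $H$: each $C_n$ is finite, and the only edges leaving a level are the path edges to the adjacent level.
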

\begin{proof}
Let $m= \max(I_S)$. Then 
$N(S) \subseteq \bigcup_{i=1}^{m+1}C_i$, which is a finite set. 
\end{proof}
Now we go back to our assumption that 
$\{S_n: n \in \omega\}$ 
is a collection of nonempty, connected, pairwise disjoint subsets
of $H$ such that for $m\neq n$ the sets $S_n, S_m$ are connected
to each other. 
We consider just two of these sets, say $S_0, S_1$. Because of lemma
\ref{infinite}, the sets $I(S_0)$ and $I(S_1)$ are infinite. For
$k = 0,1$ let $\mu_k = \min(I(S_i))$. We may assume that $\mu_0 \leq \mu_1$.
Since $I(S_0)$ is infinite, there is $n\in I(S_0)$ with $n \geq \mu_1$.
So lemma \ref{interval} implies that $(1,\mu_1) \in S_0 \cap S_1$,
contradicting the assumption that the $S_k$ are pairwise disjoint.
So we established:
\begin{proposition} The complete graph $K_\omega$ is not a minor of $H$.
\end{proposition}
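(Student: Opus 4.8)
The plan is to derive a contradiction from the standing assumption that a collection $\{S_n : n \in \omega\}$ witnessing $K_\omega$ as a minor exists, using the two lemmas already established. The key structural insight is that every $S_k$ must have infinitely many neighbors, since it must connect to each of the infinitely many other $S_j$; by the contrapositive of Lemma~\ref{infinite}, this forces $I(S_k)$ to be infinite for every $k$. Combined with Lemma~\ref{interval}, this rigidity of connected subsets is what makes the whole collection impossible.

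First I would fix attention on just two of the sets, say $S_0$ and $S_1$, and record that $I(S_0)$ and $I(S_1)$ are both infinite by the observation above. Next I would let $\mu_0 = \min(I(S_0))$ and $\mu_1 = \min(I(S_1))$ and assume without loss of generality that $\mu_0 \leq \mu_1$. The crucial move is to exploit the infinitude of $I(S_0)$: there must exist some $n \in I(S_0)$ with $n \geq \mu_1$. Then Lemma~\ref{interval} applied to the connected set $S_0$, with the two indices $\mu_0 < n$ (or $\mu_0 = n$ handled trivially) both in $I(S_0)$, yields $(1, x) \in S_0$ for every $x$ with $\mu_0 \leq x \leq n$; in particular $(1, \mu_1) \in S_0$. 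On the other hand, applying Lemma~\ref{interval} to $S_1$ forces $(1, \mu_1) \in S_1$ as well, since $\mu_1 = \min(I(S_1))$ is an endpoint and the lemma's endpoint argument puts $(1, \mu_1)$ into $S_1$. Therefore $(1, \mu_1) \in S_0 \cap S_1$, contradicting pairwise disjointness.

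The main obstacle, such as it is, lies in the bookkeeping around Lemma~\ref{interval}: the lemma as stated guarantees $(1,x) \in S$ only for $x$ strictly between two elements of $I(S)$, together with the endpoints, so I must be careful that $\mu_1$ genuinely lies in the relevant closed interval for $S_0$ and that the endpoint $\mu_1$ is correctly captured for $S_1$. Since $\mu_0 \leq \mu_1 \leq n$ with $\mu_0, n \in I(S_0)$, the point $(1,\mu_1)$ is forced into $S_0$, and since $\mu_1$ is the minimum of $I(S_1)$, picking any larger element of the infinite set $I(S_1)$ forces $(1,\mu_1)$ into $S_1$ by the same lemma. This disjointness violation completes the argument, so no such collection $\{S_n : n \in \omega\}$ can exist, and hence $K_\omega$ is not a minor of $H$.
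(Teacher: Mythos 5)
Your proof is correct and takes essentially the same route as the paper's: both fix $S_0,S_1$, use Lemma~\ref{infinite} to get that $I(S_0)$ and $I(S_1)$ are infinite, compare the minima $\mu_0\leq\mu_1$, pick $n\in I(S_0)$ with $n\geq\mu_1$, and apply Lemma~\ref{interval} to force $(1,\mu_1)\in S_0\cap S_1$, contradicting disjointness. If anything, you are slightly more explicit than the paper in justifying why $(1,\mu_1)\in S_1$ (by choosing a second element of $I(S_1)$ above $\mu_1$ before invoking Lemma~\ref{interval}), which is a welcome clarification rather than a deviation.
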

{\footnotesize

}

\begin{thebibliography}{99}
\bibitem{Ha} Hadwiger, Hugo, {\it \"Uber eine Klassifikation der 
Streckenkomplexe}, Vierteljschr.~Naturforsch.~Ges.~Z\"urich, 
{\bf 88} (1943), 133--143.
\end{thebibliography}
\end{document}